\theoremstyle{plain}
\newtheorem{theorem}{Theorem}[section]
\newtheorem{lemma}[theorem]{Lemma}
\newtheorem{proposition}[theorem]{Proposition}
\theoremstyle{definition}
\theoremstyle{remark}
\newtheorem{remark}[theorem]{Remark}
\renewcommand{\indent}{\hspace*{5mm}}
\newcommand{\cK}{{\cal K}}
\newcommand{\br}{\gamma}  %
\newcommand{\uc}{\psi^{U}}  %
\newcommand{\lc}{\psi^{L}}  %
\newcommand{\cps}{{\cal K}} %
\newcommand{\mcps}{{\cal L}} %
\newcommand{\bss}{{\cal Q}} %
\newcommand{\bssd}{{\cal M}} %
\newcommand{\dynp}{{\cal N}} %
\newcommand{\nstark}{{n_k'}}
\begin{document}

\title{A game-theoretic proof of Erd\H{o}s-Feller-Kolmogorov-Petrowsky law of the iterated
logarithm for fair-coin tossing}

\author{
Takeyuki Sasai\thanks{Graduate School of Information Science and Technology, University of Tokyo}, \ 
Kenshi Miyabe\thanks{School of Science and Technology, Meiji University} \ 
and Akimichi Takemura\footnotemark[1]\ %
}
\date{November, 2014}
\maketitle

\begin{abstract}
We give a game-theoretic proof of  the celebrated 
Erd\H{o}s-Feller-Kolmogorov-Petrowsky law of the iterated logarithm for fair-coin tossing.  
Our proof, based on Bayesian strategy, is explicit as
many other game-theoretic proofs of the laws in probability theory.
\end{abstract}

\noindent
{\it Keywords and phrases:} \ 
Bayesian strategy, 
constant-proportion betting strategy,
lower class,
upper class.

\section{Introduction}
\label{sec:intro}
Let $x_n=\pm 1$, \ $n=1,2,\dots$, \ be independent symmetric Bernoulli random variables
with $P(x_n=-1)=P(x_n=1)=1/2$.  Let 
$S_n = x_1 + x_2 + \dots + x_n$.
Concerning the behavior of $S_n$, 
the celebrated 
Erd\H{o}s-Feller-Kolmogorov-Petrowsky law of the iterated logarithm (EFKP-LIL \cite[Chapter 5.2]{Rev90}) states the following:
\begin{equation}
\label{eq:efkp-lil}
P(S_n \ge \sqrt{n}\psi(n) \ \ i.o.)= 0 \ \text{or}\ 1 \quad \text{according as} \quad
\int_1^\infty \psi(\lambda) e^{-\psi(\lambda)^2/2}  \frac{d\lambda}{\lambda}\ <  \infty  \ \text{or} \ =\infty , 
\end{equation}
where $\psi$ is a positive non-decreasing continuous function defined on $[1,\infty)$.
The set of functions $\psi$ such that $P(S_n \ge \sqrt{n}\psi(n) \ \ i.o.)= 0$
is called the {\em upper class} and 
the set of functions $\psi$ such that $P(S_n \ge \sqrt{n}\psi(n) \ \ i.o.)= 1$
is called the {\em lower class} \cite[pp.33-34]{Rev90}.

As the name indicates, this is an extension of the LIL.
The first one who showed this result seems to be Kolmogorov,
which has been stated in L\'evy's book \cite{Lev37} without a proof.
Erd\H{o}s \cite{Erd42} has given a complete proof,
which has been generalized by Feller \cite{Fel43,Fel46} (see also Bai \cite{Bai89}).
Petrowsky \cite{Pet35} has proved the statement for Brownian motion
(see also It\^o and McKean \cite[Section 1.8 and 4.12]{ItoMcK74}
and Knight \cite[Section 5.4]{Kni81}).
Further developments can be seen in the literature
such as similar statements for self-normalized sums \cite{GriKue91,CsoSzyWan03},
for weighted sums \cite{BerHorWeb10}
and for Brownian motion \cite{Kep97}.

In order to state a game-theoretic version of EFKP-LIL, 
consider the following fair-coin game with the initial capital $\alpha>0$.
\begin{quote}
{\sc Fair-Coin Game}\\
\textbf{Players}: Skeptic, Reality\\
\textbf{Protocol}:\\
\indent $\cps_0:=\alpha$.\\
\indent FOR $n=1,2,\ldots$:\\
\indent\indent Skeptic announces $M_n\in\mathbb{R}$.\\
\indent\indent Reality announces $x_n\in\{-1,1\}$.\\
\indent\indent $\cps_n:=\cps_{n-1}+M_n x_n$.\\
\textbf{Collateral Duties}:
Skeptic must keep $\cps_n$ non-negative.
Reality must keep $\cps_n$ from tending to infinity.
\end{quote}
Usually $\alpha$ is taken to be 1, but in Section \ref{sec:sharpness}  we use $\alpha\neq 1$
for notational simplicity.
Let
\begin{equation}
\label{eq:Ipsi}
I(\psi)=\int_1^\infty \psi(\lambda) e^{-\psi(\lambda)^2/2} \frac{d\lambda}{\lambda}.
\end{equation}
The goal of this paper is to prove
the game-theoretic statement of EFKP-LIL in the following form.
\begin{theorem}\label{th:efkp-lil}
Let $\psi$ be a positive non-decreasing continuous function defined on $[1,\infty)$.
In the fair-coin game,
\begin{align}
I(\psi) < \infty  \ 
\Rightarrow & \ \ \text{Skeptic can force}\ S_n < \sqrt{n}\psi(n) \ \  a.a. 
\label{eq:game-validity-integral-condition}
\\
I(\psi) = \infty \ 
\Rightarrow & \ \ \text{Skeptic can force}\ S_n \ge \sqrt{n}\psi(n) \ \  i.o.
\label{eq:sharpness1}
\end{align}
\end{theorem}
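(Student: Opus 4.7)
The plan is to prove the two implications separately by constructing, in each case, an explicit Bayesian mixture of constant-proportion betting strategies. Recall that if Skeptic sets $M_n = \theta\,\cps_{n-1}$ for a fixed $\theta\in[-1,1]$, his capital evolves as $\cps_n = \alpha\prod_{i=1}^n(1+\theta x_i)$, and $\log\cps_n \approx \log\alpha + \theta S_n - n\theta^2/2$ for moderate $\theta$. Choosing $\theta$ on the scale $\psi(n)/\sqrt{n}$ therefore converts the event $\{S_n\ge\sqrt{n}\psi(n)\}$ into capital growth of order $\exp(\psi(n)^2/2)$, which exactly matches the weight $\psi(\lambda)e^{-\psi(\lambda)^2/2}\,d\lambda/\lambda$ appearing in $I(\psi)$ as defined in \eqref{eq:Ipsi}.

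For \eqref{eq:game-validity-integral-condition}, I would fix a geometric grid $n_k=\lfloor \rho^k\rfloor$ with $\rho>1$ close to $1$. For each $k$ I use the constant-proportion strategy $\cps^{(k)}$ with ratio $\theta_k=\psi(n_k)/\sqrt{n_k}$. For any path with $S_n\ge\sqrt{n}\psi(n)$ and $n_k\le n<n_{k+1}$, the monotonicity of $\psi$ together with $n/n_k\le\rho$ yields $\log\cps^{(k)}_n\ge \psi(n_k)^2/2-o(\psi(n_k)^2)$. Mixing with weights
\[
  p_k \;\propto\; \psi(n_k)\,e^{-\psi(n_k)^2/2}\log(n_{k+1}/n_k),
\]
which are summable because they are a Riemann sum for the finite integral $I(\psi)$, produces a valid nonnegative Skeptic strategy $\sum_k p_k\cps^{(k)}$. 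On any realization where $S_n\ge\sqrt{n}\psi(n)$ occurs infinitely often, the mixed capital is bounded below by $p_k\exp(\psi(n_k)^2/2)\asymp \psi(n_k)\log(n_{k+1}/n_k)$ for infinitely many $k$, hence diverges; so Skeptic forces $S_n<\sqrt{n}\psi(n)$ almost always.

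For \eqref{eq:sharpness1} the setup is dual: Skeptic must exhibit unbounded capital on every path that eventually avoids $\{S_n\ge\sqrt{n}\psi(n)\}$. The natural game-theoretic analogue of the divergent half of Borel--Cantelli is to partition time into bounded-ratio blocks $[n_k,n_{k+1})$ and, on each block, run a sub-strategy whose capital grows precisely when the block increment $S_{n_{k+1}}-S_{n_k}$ fails to reach a threshold of order $\sqrt{n_{k+1}}\psi(n_{k+1})-S_{n_k}$; averaging the two opposite constant-proportion bets $(1\pm\theta_k x_i)$ yields a nonnegative martingale that detects such failures. Combining these per-block strategies with weights of order $\psi(n_k)e^{-\psi(n_k)^2/2}\log(n_{k+1}/n_k)$, which are now nonsummable by hypothesis, should produce a strategy whose capital diverges along every path that fails the threshold in all but finitely many blocks.

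The principal obstacle is this sharpness direction. Feller's classical proof uses strong block independence and second-moment estimates on the number of threshold crossings, and the game-theoretic counterpart must reproduce these bounds through explicit capital estimates while maintaining nonnegativity, with no underlying probability measure to appeal to. Calibrating the block sizes $n_k$, the ratios $\theta_k$, the thresholds, and the mixing weights so that $I(\psi)$ appears verbatim in the capital lower bound, while uniformly controlling the remainders in the $\log(1+\theta_k x_i)$ expansion across $k$, is where I anticipate the bulk of the technical work in Section~\ref{sec:sharpness} will lie.
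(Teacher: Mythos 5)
Your validity argument has a quantitative gap that is fatal as stated. With a fixed geometric grid $n_k=\lfloor\rho^k\rfloor$, $\rho>1$, and $\theta_k=\psi(n_k)/\sqrt{n_k}$, the guaranteed growth over the block is not $\psi(n_k)^2/2-o(\psi(n_k)^2)$. Writing $t=n/n_k\in[1,\rho]$ and using $S_n\ge\sqrt{n}\psi(n)\ge\sqrt{n}\psi(n_k)$, one gets
\[
\log\cps^{(k)}_n\;\ge\;\psi(n_k)^2\bigl(\sqrt t-\tfrac{t}{2}\bigr)-o(1)\;\ge\;\psi(n_k)^2\bigl(\sqrt\rho-\tfrac{\rho}{2}\bigr)-o(1),
\]
and $\sqrt\rho-\rho/2=\tfrac12-\tfrac{(\rho-1)^2}{8}+o((\rho-1)^2)$ is \emph{strictly} below $1/2$ for every $\rho>1$. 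The deficit is therefore $\Theta(\psi(n_k)^2)=\Theta(\ln k)$, not $o(\psi(n_k)^2)$, and after multiplying by your weight $p_k\asymp\psi(n_k)e^{-\psi(n_k)^2/2}\log\rho$ the block contribution is
$\asymp\psi(n_k)\,k^{-(\rho-1)^2/4}\to 0$, so the mixture does \emph{not} blow up on paths where $S_n\ge\sqrt{n}\psi(n)$ i.o. The paper avoids exactly this by taking $\theta_k=\psi(k)/\sqrt{k}$ for \emph{every} integer $k$ (an effective block ratio $1+1/\psi_n\to1$, so the deficit becomes $O(1)$) and, crucially, inserting an unbounded amplifier $a_k\uparrow\infty$ into the weights (possible because a convergent series stays convergent after a slow amplification); divergence then comes from $a_k\to\infty$, not from the per-block gain. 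Without a shrinking block ratio and an $a_k$-type boost, the Borel--Cantelli-style mixture you propose does not force the validity.

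On the sharpness side, your setup also diverges from what actually works: you propose bounded-ratio blocks, whereas the paper must use very sparse cycles $n_k=k^{5k}$ (it explicitly notes that cycles sparser than $e^{ck\ln_2 k}$ are needed) so that the per-cycle capital processes behave almost independently and the sell/buy combination $2\mcps^\gamma_n-\cps^{\gamma e}_n$ stays bounded above. That said, your high-level intuition — a per-block buy/sell Bayesian mixture whose ``cost'' matches the term $\psi(n_k)e^{-\psi(n_k)^2/2}$, with nonsummability of these terms driving the divergence — is indeed the engine of the paper's Proposition~\ref{prop:dynp}; the missing ingredients are the change of time scale $n_k=k^{5k}$, the uniform mixture $\mcps^\gamma_n$ over betting fractions, and the careful upper bounds of Lemmas~\ref{lem:mcps-upper}--\ref{lem:bss-bound}.
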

The first statement is the {\em validity} and the second statement is the {\em sharpness} of EFKP-LIL.  
For terminology and notions of game-theoretic probability see \cite{shavov01}.
As shown in Chapter 8 of \cite{shavov01}, game-theoretic statement  of EFKP-LIL
in \eqref{eq:game-validity-integral-condition} and 
\eqref{eq:sharpness1} implies the measure-theoretic statement in \eqref{eq:efkp-lil}.
We use the same line of arguments of the proof of  LIL in Chapter
5 of Shafer and Vovk \cite{shavov01}, but we use some 
Bayesian strategies as ingredients. Our proof shows that the proof by 
Shafer and Vovk can be adapted to prove some stronger forms of LIL.

In Section  \ref{sec:validity} we give a proof of the validity and
in Section \ref{sec:sharpness} we give a proof of the sharpness.  
We discuss some topics for further research in Section \ref{sec:discussion}.

We use the following notation throughout the paper
\[
\ln_k n = \underbrace{\ln \ln \dots \ln}_{k \text{times}} n.
\]

\section{Validity}
\label{sec:validity}
As is often seen in the upper-lower class theory
(see Feller \cite[Lemma 1]{Fel46}),
we can restrict our attention to $\psi$ such that
\begin{align}\label{eq:uc0}
\lc(n)\le\psi(n)\le\uc(n)\mbox{ for all sufficiently large }n,
\end{align}
where
\[
\lc(n)=\sqrt{2 \ln_2 n + 3 \ln_3 n}, \quad
\uc(n)=\sqrt{2 \ln_2 n + 4 \ln_3 n}.
\]
Here $L$ means the lower class and $U$ means the upper class.
It can be verified that $I(\uc)<\infty$ and $I(\lc)=\infty$.
%
We discretize the integral in \eqref{eq:Ipsi}
as
\begin{equation}
\label{eq:validity-suml-condition}
\sum_{k=1}^\infty \frac{ \psi(k)}{k} e^{-\psi(k)^2/2} < \infty.
\end{equation}
Since $xe^{-x^2/2}$ is decreasing for $x\ge1$,
the function $\lambda\mapsto\frac{\psi(\lambda)}{\lambda}e^{-\psi(\lambda)^2/2}$ is decreasing for $\lambda$ such that $\psi(\lambda)\ge 1$ 
and 
convergences of the integral in \eqref{eq:Ipsi}
and the sum in \eqref{eq:validity-suml-condition} are equivalent.

\subsection{Constant-proportion betting strategy}

Our proof highly depends on constant-proportion betting strategy (and its mixture).
Here we give basic properties.

We fix a small positive  $\delta$
for the rest of this paper, e.g., $\delta =0.01$.

A constant-proportion betting strategy 
with the parameter $\br$ sets
\[
M_n = \br \cps_{n-1}
\]
for a constant $\br \in (-1,1)$.
For the rest of this paper we assume $0\le \br \le \delta$.
The capital process with this strategy is denoted by $\cps^\br_n$.
Note that $\cps^\br_n$ is always positive.
With the initial capital of $\cps^\br_0=\alpha$,
the value $\cps^\br_n$ can be evaluated as
\begin{equation}
\label{eq:cps-2}
\cps^\br_n = \alpha \prod_{i=1}^n (1+\br x_i)= \alpha \left(\frac{1+\br}{1-\br}\right)^{S_n/2} (1-\br^2)^{n/2} .
\end{equation}
Note that $\cps^\br_n$ is determined (except $n$ and $\br$) by $S_n$ and 
is monotone increasing in $S_n$.
In particular, by \eqref{eq:cps-2}, we have
\begin{equation}
\label{eq:negative-capital}
S_n \le 0 \ \ \Rightarrow \ \  \cps^\br_n \le \alpha.
\end{equation}

By the fact that
\[
t - \frac{1}{2}t^2- |t|^3
\le 
\ln(1+t) \le t -  \frac{1}{2}t^2 + |t|^3
\]
for $|t|\le \delta$,
taking the logarithm of $\prod_{i=1}^n (1+\br x_i)$  we have
\begin{equation*}
\br S_n - \frac{1}{2}\br^2 n - \br^3 n \le \ln (\cps_n^\br/\alpha)
\le \br S_n - \frac{1}{2}\br^2 n + \br^3 n
\end{equation*}
and
\begin{equation}
\label{eq:cp-bound2}
e^{-\br^3 n} e^{\br S_n - \br^2 n/2} \le \cps^\br_n/\alpha  \le
e^{ \br^3 n} e^{\br S_n - \br^2 n/2}.
\end{equation}
For the proof of validity, we only use the lower bound in
\eqref{eq:cp-bound2}.

\subsection{Proof of validity}

In this section we let $\alpha=1$.
For notational simplicity we write $\psi_k = \psi(k)$.
The convergence of the infinite series in \eqref{eq:validity-suml-condition} implies the existence of a non-decreasing sequence of positive reals $a_k$ diverging to infinity ($a_k\uparrow \infty$), such that
the series multiplied term by term by $a_k$ is still convergent:
\begin{align}
\label{eq:validity-suml-condition1}
Z:=\sum_{k=1}^\infty a_k\frac{ \psi_k}{k} e^{-\psi_k^2/2}  < \infty.
\end{align}
This is easily seen by dividing the infinite series into blocks of sums less than
or equal to $1/2^k$ and multiplying the $k$-th block by $k$
(see also \cite[Lemma 4.15]{miyabe_convrandseries}).

For $k\ge 1$ let 
\[
p_k = \frac{1}{Z}a_k \frac{\psi_k}{k} e^{-\psi_k^2/2} 
\]
and consider the capital process of a countable mixture of constant-proportion strategies
\begin{align}\label{eq:validity-br}
\cK_n = \sum_{k=1}^\infty p_k \cps_n^{\br_k},
\quad\mbox{ where }\quad
\br_k = \frac{\psi_k}{\sqrt{k}}. 
\end{align}
Obviously $\cK_n$ is never negative.
By the upper bound in \eqref{eq:uc0}, as $k\rightarrow\infty$ we have
\[
\br_k \le \sqrt{\frac{2 \ln_2 k + 4 \ln_3k}{k}} \rightarrow 0. 
\]
Hence $\br_k < \delta$ for sufficiently large $k$.

We now confirm that $\limsup_n \cK_n=\infty$ if 
$S_n \ge \sqrt{n}\psi_n$ infinitely often.
By \eqref{eq:cp-bound2} and \eqref{eq:validity-suml-condition1},
we have
\allowdisplaybreaks
\begin{align*}
Z \cK_n &\ge Z\sum_{k: \br_k < \delta} p_k \exp( \br_k S_n  - \frac{\br_k^2 n}{2} -  \br_k^3 n)\\
&= \sum_{k: \br_k < \delta}  a_k \frac{\psi_k}{k} \exp(-\frac{\psi_k^2}{2} +  \br_k S_n 
-\frac{\br_k^2 n}{2} - \br_k^3 n).
\end{align*}
We consider $n$ and $k$ such that
$S_n \ge \sqrt{n}\psi_n$, \ $\br_k<\delta$, \ $\left\lfloor n-n/\psi_n\right\rfloor\le k\le n$
and $\psi_n/(\psi_n -1)\le 1+\delta/2$.
By \eqref{eq:validity-br},
we have
\begin{align*}
-\frac{\psi_k^2}{2}+ \br_k S_n - \frac{\br_k^2 n}{2}
&\ge-\frac{\psi_k^2}{2}+\sqrt{n}\psi_n\frac{\psi_k}{\sqrt{k}}-\frac{\psi_k^2}{k}\frac{n}{2}\\
&=\psi_k\left(-\frac{1}{2}\left(1+\frac{n}{k}\right)\psi_k+\sqrt{\frac{n}{k}}\psi_n\right)\\
&\ge-\frac{\psi_n^2}{2}\left(\sqrt{\frac{n}{k}}-1\right)^2
\ge-\frac{\psi_n^2}{2}\left(\frac{n}{k}-1\right)^2 \\
&\ge-\frac{1}{2}\left(\frac{\psi_n}{\psi_n-1}\right)^2
\ge-\frac{1}{2}-\delta.
\end{align*}
For sufficiently large $n$, we have
\begin{align*}
\psi_n\le\uc(n)<\uc(2k)=\sqrt{2\ln_2( 2k)+4\ln_3(2k)}
 <2\sqrt{2\ln_2 k+3\ln_3 k}=2\lc(k)\le2\psi_k.
\end{align*}
Thus,
\begin{align*}
Z \cK_n 
&\ge  \sum_{k=\left\lfloor n-n/\psi_n \right\rfloor}^n a_k \frac{\psi_k}{k}
\exp(-\frac{1}{2}-\delta - \br_k^3 n)\\
&\ge  a_{\left\lfloor n-n/\psi_n \right\rfloor} \frac{\psi_n}{2n} \sum_{k=\left\lfloor n-n/\psi_n \right\rfloor}^n 
\exp(-\frac{1}{2}-\delta -\br_n^3 n)\\
&\ge  a_{\left\lfloor n-n/\psi_n \right\rfloor} \frac{\psi_n}{2n} \left(\frac{n}{\psi_n}-1\right)\exp(-\frac{1}{2}-\delta -\br_n^3 n)\\
&=  a_{\left\lfloor n-n/\psi_n \right\rfloor} \left(\frac{1}{2}-\frac{\psi_n}{2n}\right)\exp(-\frac{1}{2}-\delta -\br_n^3 n).
\end{align*}
Since $a_{\left\lfloor n-n/\psi_n \right\rfloor}\rightarrow\infty$, \ 
$\psi_n/n\to0$  and $\br_n^3 n\rightarrow 0$, we have shown
\[
S_n \ge \sqrt{n}\psi_n  \ i.o.\ \Rightarrow \ \limsup_{n\rightarrow\infty} \cK_n = \infty.
\]

\section{Sharpness}
\label{sec:sharpness}

In this section we prove the sharpness
\eqref{eq:sharpness1} of EFKP-LIL in game-theoretic probability, following the approach in Chapter 5 of \cite{shavov01} and \cite{miyabe-takemura-lil}.
We divide our proof into several subsections.
For notational simplicity we use the initial capital of $\alpha=1-2/e=(e-2)/e$ in this section.

\subsection{Change of time scale}
\label{subsec:regularity-conditions}

The first key of our proof is a change of time scale 
from $\lambda$ to $k$:
\[
\lambda= e^{5k \ln k}= k^{5k}.
\]

\begin{remark} 
\label{rem:1}
``5'' in $5k\ln k$ is not essential and we use this time scale for simplicity.
\end{remark}

By taking the derivative of  $\ln \lambda= 5 k \ln k$, we have
\[
\frac{d\lambda}{\lambda} =  5(\ln k+1)dk.
\]
Hence the integrability condition is written as
\begin{equation*}
\int_1^\infty \psi(\lambda) e^{-\psi(\lambda)^2/2} \frac{d\lambda}{\lambda}  = \infty
\  \Leftrightarrow \ 
\int_1^\infty (\ln k) \psi(e^{5k\ln k}) e^{-\psi(e^{5k\ln k})^2/2} dk = \infty.
\end{equation*}
Let $f(x)=\psi(e^{5x\ln x}) e^{-\psi(e^{5x\ln x})^2/2}$.
Since $xe^{-x^2/2}$ is decreasing for $x\ge 1$,
the function $f(x)$ is decreasing for $x$ such that $\psi(e^{5x\ln x})\ge 1$.
Thus, for sufficiently large $k$ and $x$ such that $k\le x\le k+1$, we have
\begin{align*}
\ln x f(x)\ge& \ln k f(x+1)\ge \frac{1}{2}\ln (k+1) f(k+1),\\
\ln x f(x)\le& \ln (k+1) f(x)\le 2\ln k f(k).
\end{align*}
Hence, we have
\begin{equation*}
\int_1^\infty (\ln k) \psi(e^{5k\ln k}) e^{-\psi(e^{5k\ln k})^2/2} dk = \infty 
\ \ \Leftrightarrow \ \ 
\sum_{k=1}^\infty  (\ln k) \psi(e^{5k\ln k}) e^{-\psi(e^{5k\ln k})^2/2} = \infty .
\end{equation*}
Then, it suffices to show that
\begin{equation*}
\sum_{k=1}^\infty  (\ln k) \psi(e^{5k\ln k}) e^{-\psi(e^{5k\ln k})^2/2} = \infty 
\ \  \Rightarrow  \ \ 
\text{Skeptic can force}\ S_n > \sqrt{n} \psi(n)\   i.o.
\end{equation*}
Recall that we can assume \eqref{eq:uc0} here again.

\subsection{Bounding relevant capital processes}
\label{subsec:mixture-strategy}
In this section we introduce mixtures of constant-proportion betting strategies
and bound their capital processes.
We discuss relevant capital processes in further subsections.

\subsubsection{Uniform mixture of constant-proportion betting strategies}
\label{subsubsec:uniform-mixture}
We consider a continuous uniform mixture of constant-proportion strategies with 
the betting proportion $u\br$, $2/e\le u\le 1$. This is a Bayesian strategy, a similar one to which has been considered in \cite{kumon-bayesian}. 

Define
\[
\mcps^\br_n 
=\int_{2/e}^1 \prod_{i=1}^n (1+u\br x_i)du, \qquad \mcps^\br_0=\alpha=1-e/2.
\]
At round $n$ this strategy bets
$
M_n = \int_{2/e}^1 u\br   \prod_{i=1}^{n-1} (1+u\br x_i) du .
$
Then by induction on $n$, the capital process is indeed written as
\begin{align*}
\mcps^\br_n 
&= 
\mcps^\br_{n-1} + M_n x_n =\int_{2/e}^1 \prod_{i=1}^{n-1} (1+u\br x_i) du
+ x_n\int_{2/e}^1 u\br\prod_{i=1}^{n-1} (1+u\br x_i) du \\ 
&=   
\int_{2/e}^1 \prod_{i=1}^n (1+u\br x_i)du.  %
\end{align*}
Applying \eqref{eq:cp-bound2} and noting $u^3 \br^3 \le \br^3$,  we have
\begin{equation*}
 e^{ -\br^3 n} \int_{2/e}^1 e^{u\br S_n - u^2\br^2 n/2}du 
 \le
\mcps^\br_n 
\le  
e^{ \br^3 n}\int_{2/e}^1 e^{u\br S_n - u^2\br^2 n/2}  du .
\end{equation*}

We further bound the integral in the following lemma.

\begin{lemma}\label{lem:mcps-upper} %
\begin{numcases}
{\mcps^\br_n
\le }
\min\left\{1,e^{\br^3 n}\frac{\sqrt{2\pi}}{\br\sqrt{n}}\right\}
& \text{if}\quad $S_n\le 0$, \label{eq:mcps-negative-sn}
\\
 e^{\br^3 n} e^{2\br(S_n/e - \br n/e^2)}
& \text{if}\quad $0<S_n\le 2\br n/e$,
\label{eq:mcps-positive1-sn}\\
 e^{\br^3 n}\min \left\{ e^{S_n^2/(2n)} \frac{\sqrt{2\pi}}{\br \sqrt{n}}, e^{\br S_n/2} \right\}
&  \text{if}\quad $2\br n/e<S_n< \br n$,
\label{eq:mcps-positive2-sn}
\\
e^{\br^3 n}\min\left\{e^{S_n^2/(2n)}\frac{\sqrt{2\pi}}{\br\sqrt{n}},e^{\br S_n-\br^2 n/2}\right\}& \text{if}\quad $S_n \ge  \br n$.
\label{eq:mcps-positive3-sn}
\end{numcases}

\end{lemma}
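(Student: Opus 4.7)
The plan is to estimate
\[
\mcps^\br_n \;\le\; e^{\br^3 n}\int_{2/e}^1 e^{g(u)}\,du, \qquad g(u) := u\br S_n - \frac{1}{2}u^2 \br^2 n,
\]
using the upper bound displayed just before the lemma, and then bound the remaining integral by two complementary methods whose combination gives the stated minima. Completing the square, $g(u) = S_n^2/(2n) - \frac{1}{2}\br^2 n(u-u^*)^2$ with $u^* := S_n/(\br n)$, and then extending integration to $\mathbb{R}$, yields the Gaussian estimate
\[
\int_{2/e}^1 e^{g(u)}\,du \;\le\; e^{S_n^2/(2n)}\,\frac{\sqrt{2\pi}}{\br\sqrt{n}}.
\]
The complementary pointwise bound is $\int_{2/e}^1 e^{g(u)}\,du \le (1-2/e)\exp\bigl(\max_{[2/e,1]} g\bigr) \le \exp\bigl(\max_{[2/e,1]} g\bigr)$, whose exponent depends on where $u^*$ lies relative to $[2/e,1]$.

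A case split on the location of $u^*$ produces the four sub-bounds. If $u^* \le 0$ (equivalently $S_n \le 0$), applying \eqref{eq:negative-capital} to each $u\br \in [0,\br]$ gives $\prod_{i=1}^n(1+u\br x_i) \le 1$, so $\mcps^\br_n \le 1-2/e \le 1$; and since $u\br S_n \le 0$ for $u\ge 0$, I have $g(u)\le -u^2\br^2 n/2$, so
\[
\int_{2/e}^1 e^{g(u)}\,du \;\le\; \int_0^\infty e^{-u^2\br^2 n/2}\,du \;=\; \frac{\sqrt{2\pi}}{2\br\sqrt{n}}.
\]
If $0 < u^* \le 2/e$, the concave quadratic $g$ is decreasing on $[2/e,1]$, so $\max g = g(2/e) = 2\br(S_n/e - \br n/e^2)$. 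If $u^* \in (2/e,1)$, the global maximum $g(u^*) = S_n^2/(2n)$ sits inside $[2/e,1]$, and it is bounded above by $\br S_n/2$ because $S_n < \br n$. If $u^* \ge 1$, then $g$ is increasing on $[2/e,1]$, so $\max g = g(1) = \br S_n - \br^2 n/2$. Combining these pointwise exponents with the Gaussian estimate and reinstating the overall factor $e^{\br^3 n}$ reproduces the four displayed inequalities.

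The only mildly subtle step is Case (i): completing the square would carry a spurious $e^{S_n^2/(2n)}$ prefactor even though $S_n$ is nonpositive, so I bypass it by directly using $g(u) \le -u^2\br^2 n/2$ and extending integration to $[0,\infty)$. Likewise, the $\mcps^\br_n \le 1$ bound in the same case must come from the non-integral argument via \eqref{eq:negative-capital}, not from any pointwise estimate on $g$. The other three cases are a routine identification of $\max_{[2/e,1]} g$ from the geometry of a parabola relative to the fixed interval.
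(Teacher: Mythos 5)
Your proof is correct and follows essentially the same route as the paper: complete the square, extend the Gaussian integral to get the $e^{S_n^2/(2n)}\sqrt{2\pi}/(\br\sqrt{n})$ bound, bound the integral pointwise by $\exp(\max_{[2/e,1]}g)$, and split cases on where $u^*=S_n/(\br n)$ sits relative to $[2/e,1]$, with the $S_n\le 0$ case handled separately by the direct estimate $g(u)\le -u^2\br^2 n/2$ and by \eqref{eq:negative-capital}. The only cosmetic difference is that you integrate over $[0,\infty)$ rather than $\mathbb{R}$ in Case 1, giving the slightly sharper constant $\sqrt{2\pi}/(2\br\sqrt{n})$.
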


\begin{proof}
Completing the square we have
\begin{equation}
\label{eq:completing-the-square}
- \frac{1}{2}u^2\br^2 n  + u\br S_n = -\frac{\br^2 n}{2}  \left(u-\frac{S_n}{\br n}\right)^2 + 
\frac{S_n^2}{2n}.
\end{equation}
Hence  by the change of variables
\[
v = \br \sqrt{n} \left( u-\frac{S_n}{\br n}\right), \qquad  du = \frac{dv}{\br \sqrt{n}},
\]
we obtain
\begin{align*}
\int_{2/e}^1 e^{u \br S_n - u^2\br^2 n/2}  du 
&=
e^{S_n^2/(2n)}
\int_{2/e}^1 \exp \left(-\frac{\br^2 n}{2}  \left(u-\frac{S_n}{\br n}\right)^2\right) du\nonumber \\
&= 
e^{S_n^2/(2n)}\frac{1}{\br \sqrt{n}} \int_{2 \br \sqrt{n}/e-S_n/\sqrt{n}}^{\br \sqrt{n} - S_n/\sqrt{n}}
e^{-v^2/2} dv .
\end{align*}
Then for all cases we can bound $\mcps^\br_n$ from above as
\begin{equation}
\label{eq:Q-bound-up-1}
\mcps^\br_n \le  e^{\br^3 n + S_n^2/(2n)} \frac{\sqrt{2\pi}}{\br \sqrt{n}}.
\end{equation}

Without change of variables, we can also bound 
the integral $\int_{2/e}^1 g(u)du$, $g(u)=e^{u \br S_n - u^2\br^2 n/2}$, directly as
\[
\int_{2/e}^1 g(u)du \le
\max_{2/e\le u\le 1} g(u).
\]
Note that 
\begin{equation}
g(2/e)=e ^{2 \br (S_n/e - \br n /e^2)}, \quad g(1)=e^{ \br S_n-\br^2n/2}.
\label{eq:g-lower-upper}
\end{equation}
We now consider the four cases.
\begin{description}
\item[Case 1] $S_n \le 0$. \ 
In this case 
\[
\int_{2/e}^1 e^{u \br S_n - u^2\br^2 n/2}  du 
\le
 \int_{2/e}^1 e^{- u^2\br^2 n/2} du \le 
\frac{\sqrt{2\pi}}{\br\sqrt{n}}.
\]
Also by \eqref{eq:negative-capital}, $\mcps^\br_n  \le 1-2/e\le 1$  if $S_n \le 0$.
This gives \eqref{eq:mcps-negative-sn}.
\item[Case 2] $0 < S_n \le 2 \br n/e$. \ In this case $S_n/(\br n) \le 2/e$ and
by the unimodality of $g(u)$ we have $\max_{2/e\le u \le 1}g(u)= g(2/e)$. Hence 
\eqref{eq:mcps-positive1-sn} follows from \eqref{eq:g-lower-upper}.
\item[Case 3] $2 \br n/e < S_n < \br n$. \  %
In this case  $\max_{2/e\le u \le 1} g(u)=g(S_n/(\br n))=e^{S_n^2/(2n)}$ and
$\mcps^\br_n 
\le
e^{\br^3 n} e^{S_n^2/(2n)}$.
Furthermore in this case $S_n^2 < \br n S_n$ implies $S_n^2/(2n) <  \br S_n/2$
and we also have
\begin{equation}
\label{eq:qn1-case2a}
\mcps^\br_n  
\le
e^{\br^3 n} e^{\br S_n/2}.
\end{equation}
By \eqref{eq:Q-bound-up-1} %
and \eqref{eq:qn1-case2a}, 
we have \eqref{eq:mcps-positive2-sn}.
\item[Case 4] $S_n \ge \br n$.  \ Then $S_n/(\br n)\ge 1$ %
and $\max_{2/e\le u \le 1} g(u)=g(1)$. Hence
\begin{equation}
\label{eq:qn1-case1}
\mcps^\br_n  
\le
e^{\br^3 n} e^{ \br S_n-\br^2n/2}. %
\end{equation}
By \eqref{eq:Q-bound-up-1} and \eqref{eq:qn1-case1},
we have \eqref{eq:mcps-positive3-sn}.
\end{description} 
\end{proof}

\subsubsection{Buying a process and selling a process}
\label{subsubsec:bss}
Next we consider the following capital process.
\begin{equation*}
\bss_n^{\br} =  2 \mcps_n^\br  - \cps_n^{\br e}.
\end{equation*}
This capital process consists of buying two units of $\mcps_n^\br$ and
selling one unit of $\cps_n^{\br e}$. 
This combination of selling and buying is essential in the game-theoretic proof of the law of the iterated logarithm
in Chapter 5 of \cite{shavov01} and \cite{miyabe-takemura-lil}.
However, unlike Chapter 5 of \cite{shavov01} and \cite{miyabe-takemura-lil}, where
combination of {\em three} processes is used, we only combine {\em two} capital processes.

We want to bound $\bss_n^{\br}$ from above. 

\begin{lemma}\label{lem:bss-bound}
Let 
\begin{align}
\label{eq:case-i-negative-condition-1}
C_1 
&= 2 e^{\br^3 n} 
\exp \left(\frac{(2e-1)(\br^3(1+e^3)n + \ln 2)}{(e-1)^2}\right).
\end{align}
Then
\begin{numcases}
{\bss_n^\br 
\le }
2 \min\left\{1, e^{\br^3 n} \frac{\sqrt{2\pi}}{\br \sqrt{n}}\right\} 
& \text{if}\quad $S_n\le 0$, \label{eq:bss-negative-sn}
\\
 C_1 
& \text{if}\quad $0<S_n\le \br n/e$, \label{eq:bss-positive1-sn}\\
2 e^{\br^3 n}\min \left\{ e^{S_n^2/(2n)} \frac{\sqrt{2\pi}}{\br \sqrt{n}}, e^{\br S_n} \right\}
&  \text{if}\quad $\br n/e<S_n< e\br n$,
\label{eq:bss-positive2-sn}
\\
C_1 
& \text{if}\quad $S_n \ge e \br n$.
\label{eq:bss-positive3-sn}
\end{numcases}
\end{lemma}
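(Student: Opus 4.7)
The plan is to combine the four-case bound on $\mcps_n^\br$ from Lemma \ref{lem:mcps-upper} with the lower bound on $\cps_n^{\br e}$ obtained by replacing $\br$ by $\br e$ in \eqref{eq:cp-bound2}, namely
\[
\cps_n^{\br e} \ge \exp\bigl(-e^3 \br^3 n + e\br S_n - e^2 \br^2 n/2\bigr).
\]
In the two cases $S_n\le 0$ and $\br n/e < S_n < e\br n$ I would simply drop the subtracted process using $\cps_n^{\br e}>0$, reducing to $\bss_n^\br \le 2\mcps_n^\br$; in the two remaining ranges $0<S_n\le\br n/e$ and $S_n\ge e\br n$ the subtraction plays an active role and its interaction with the upper bound on $\mcps_n^\br$ produces the constant $C_1$.

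The first case $S_n\le 0$ is then immediate from \eqref{eq:mcps-negative-sn}. For the third case $\br n/e<S_n<e\br n$, the first entry of the minimum in \eqref{eq:bss-positive2-sn} is just \eqref{eq:Q-bound-up-1}; for the second entry I would verify the uniform estimate $\mcps_n^\br \le e^{\br^3 n+\br S_n}$, which actually holds for every $S_n \ge 0$ because the integrand $e^{u\br S_n - u^2\br^2 n/2}$ satisfies $(u-1)\br S_n\le 0 \le u^2\br^2 n/2$ on $u\in[2/e,1]$ and is therefore pointwise bounded by $e^{\br S_n}$. The second case $0<S_n\le\br n/e$ is also fairly easy: the Case 2 estimate of Lemma \ref{lem:mcps-upper} has exponent $\br^3 n + 2\br S_n/e - 2\br^2 n/e^2$, which is increasing in $S_n$, so evaluating at the right endpoint gives $\mcps_n^\br \le e^{\br^3 n}$ uniformly in the range, whence $\bss_n^\br \le 2e^{\br^3 n}\le C_1$.

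The real work lies in the fourth case $S_n\ge e\br n$. Substituting the Case 4 bound $\mcps_n^\br \le e^{\br^3 n+\br S_n-\br^2 n/2}$ of Lemma \ref{lem:mcps-upper} together with the lower bound on $\cps_n^{\br e}$ recorded above yields an expression of the form $2e^{A+\br S_n} - e^{B+e\br S_n}$ for explicit constants $A,B$ depending on $\br$ and $n$. I would maximize this over $S_n$ by elementary calculus: differentiating, solving the critical equation, and substituting back produce a closed-form maximum. The main obstacle will be the exponent bookkeeping at this last step, namely verifying that the closed form is bounded above by $C_1$ with precisely the factors $(2e-1)/(e-1)^2$ and $(1+e^3)$ appearing in \eqref{eq:case-i-negative-condition-1}; this matching of constants is exactly what dictates the somewhat unusual form of $C_1$, and also explains why both the ``small positive'' and ``large positive'' cases are collected into a single bound.
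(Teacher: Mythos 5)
Your treatment of the first three cases is correct and essentially the paper's: cases $S_n\le 0$ and $0<S_n\le \br n/e$ reduce to $\bss_n^\br\le 2\mcps_n^\br$ exactly as in the paper, and your pointwise bound $e^{u\br S_n-u^2\br^2 n/2}\le e^{\br S_n}$ for $u\in[2/e,1]$, $S_n\ge 0$ is a slightly cleaner way to get the second entry of the minimum in \eqref{eq:bss-positive2-sn} than the paper's sub-splitting of case (iii).

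The gap is in case (iv). The unconstrained maximum of $h(s)=2e^{\br^3 n}e^{\br s-\br^2 n/2}-e^{-e^3\br^3 n}e^{e\br s-e^2\br^2 n/2}$ over $s\in\mathbb{R}$ is \emph{not} bounded by $C_1$. Solving the critical equation gives $(e-1)\br s^*=\ln(2/e)+(1+e^3)\br^3 n+\tfrac{1}{2}(e^2-1)\br^2 n$, and substituting back yields
\[
h(s^*)=2\Bigl(1-\tfrac{1}{e}\Bigr)e^{\br^3 n}\exp\!\left(\frac{A-1}{e-1}+\frac{e}{2}\br^2 n\right),\qquad A=\br^3(1+e^3)n+\ln 2 .
\]
The term $\tfrac{e}{2}\br^2 n$ in the exponent is not controlled by $\br^3 n$: taking $\br=n^{-1/3}$ gives $\br^3 n=1$ while $\br^2 n=n^{1/3}\to\infty$, so $h(s^*)\to\infty$ although $C_1$ stays bounded (and $C_1$ \emph{must} depend on $\br,n$ only through $\br^3 n$ for the later use in Section \ref{subsec:dynamic-strategy}; see Remark \ref{rem:2}). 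So "differentiate, solve, substitute back" does not close the argument by bookkeeping alone; you must use the constraint $S_n\ge e\br n$. Concretely: the critical point $s^*$ is feasible ($\br s^*\ge e\br^2 n$) only when $\tfrac{1}{2}(e-1)^2\br^2 n\le \ln(2/e)+(1+e^3)\br^3 n<A$, which forces $\br^2 n\le 2A/(e-1)^2$ and then turns $\tfrac{e}{2}\br^2 n$ into $\le eA/(e-1)^2$, whence $h(s^*)\le C_1$; when $s^*$ is infeasible, $h$ is decreasing on $[e\br n,\infty)$ and you must check the boundary value $h(e\br n)$ separately (it is $\le 0$ when $\tfrac{1}{2}(e-1)^2\br^2 n\ge A$, and $\le C_1$ otherwise by the same bound on $\br^2 n$). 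This extraction of $\br^2 n\le 2A/(e-1)^2$ from the case hypotheses is exactly the step the paper performs by combining \eqref{eq:case-iii-1a} and \eqref{eq:case-iii-1b}; without it, or its calculus equivalent above, the proof of \eqref{eq:bss-positive3-sn} does not go through.
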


\begin{remark}
\label{rem:2}
In this lemma, $C_1$ depends on $\br$ and $n$ through $\br^3 n$.
However from Section \ref{subsec:dynamic-strategy} on, we take
$\br^3 n$ to be sufficiently small. Hence $C_1$ can be taken to be
a constant (cf.\ \eqref{eq:C1-fixed}) not depending on $\br$ and $n$. 
Also note that the interval for $S_n$ in
\eqref{eq:bss-positive2-sn} is larger than the interval in \eqref{eq:mcps-positive2-sn}.
\end{remark}

\begin{proof}
We bound $\bss_n^\br =2\mcps_n^\br - \cps_n^{\br e}$ from above
in the following four cases:
\[
\text{(i)} \ S_n \le 0, \quad
\text{(ii)}\ 0< S_n \le \br n /e,  \quad 
\text{(iii)}\ \br n/e < S_n < e \br n, \quad 
\text{(iv)}\ S_n \ge e  \br n,
\]
\begin{description}
\item[Case (i)]  By $\bss_n^\br \le 2 \mcps_n^\br$, 
\eqref{eq:bss-negative-sn} follows  from \eqref{eq:mcps-negative-sn}.
\item[Case (ii)]\ In this case $S_n/e - \br n /e^2\le 0$. Hence
\eqref{eq:bss-positive1-sn} follows from \eqref{eq:mcps-positive1-sn} and $\bss_n^\br \le 2 \mcps_n^\br$.

\item[Case (iii)] 
We again use $\bss^\br_n\le 2\mcps^\br_n$.
If $\br n/e < S_n \le 2 \br n/e$, then 
\[
\frac{S_n}{e} - \frac{\br n }{e^2}\le \frac{\br n}{e^2} \le  \frac{S_n}{e}
\]
and 
$\mcps_n^\br\le e^{\br^3 n}e^{2\br S_n/e} \le e^{\br^3 n}e^{\br S_n}$
from \eqref{eq:mcps-positive1-sn}.
Otherwise \eqref{eq:bss-positive2-sn} follows from 
\eqref{eq:mcps-positive2-sn} and \eqref{eq:mcps-positive3-sn}.

\item[Case (iv)] \ 
Since $S_n \ge e n \br > n \br$, by \eqref{eq:qn1-case1}
we have $\mcps^\br_n  \le e^{\br^3 n} e^{ \br S_n-\br^2n/2}$
and 
\begin{align*}
\bss_n^\br
 &\le 
 2\mcps_n^\br - \cps_n^{\br e}
\le 2  e^{\br^3 n} e^{ \br S_n-\br^2n/2} - e^{-\br^3 e^3n}e^{\br e S_n - \br^2 e^2 n/2}\\
&= 
2e^{\br ^3 n} e^{ \br S_n-\br^2n/2} 
\left( 1 - \frac{1}{2} e^{-\br^3(1+e^3)n}e^{\br (e-1)S_n - (e^2-1)\br^2n/2}\right). 
\end{align*}
Hence if the right-hand side is non-positive we have $\bss_n^\br \le 0$:
\begin{align}
&S_n \ge e n \br \ \  \text{and}\ -\br^3(1+e^3)n - \ln 2 + \br(e-1)S_n - \frac{1}{2}(e^2-1)\br^2n \ge 0 \nonumber\\
& \qquad \qquad \Rightarrow \ \ \bss_n^\br \le 0.
\label{eq:case-iii-1}
\end{align}
Otherwise, write  $A=\br^3(1+e^3)n + \ln 2$ and 
consider the case 
\[
\br(e-1)S_n - \frac{1}{2}(e^2-1)\br^2n \le  A.
\] 
Dividing this by $e-1$ 
and also considering $S_n \ge e n\br$, we have
\begin{align}
\label{eq:case-iii-1a}
\br S_n - \frac{1}{2}(e+1)\br^2n &\le  \frac{A}{e-1},\\
-S_n +en \br &\le 0. 
\label{eq:case-iii-1b}
\end{align}
$\br \times \eqref{eq:case-iii-1b} + \eqref{eq:case-iii-1a}$ gives
\[
\frac{1}{2}(e-1) \br^2 n \le \frac{A}{e-1} \quad \text{or} \quad \frac{1}{2} \br^2 n \le \frac{A}{(e-1)^2}.
\]
Then by \eqref{eq:case-iii-1a}
\[
\br S_n - \frac{1}{2}\br^2 n \le \frac{A}{e-1} + \frac{e}{2} \br^2 n \le
\frac{A}{e-1} + \frac{eA}{(e-1)^2}=\frac{(2e-1)A}{(e-1)^2}.
\]
Hence just using $\bss_n^\br\le 2\mcps_n^\br$ and \eqref{eq:qn1-case1} in this case, we obtain
\begin{equation}
\label{eq:case-iii-conclusion}
\bss_n^\br  \le 2 e^{\br^3 n} \exp\left(\frac{(2e-1)(\br^3(1+e^3)n + \ln 2 )}{(e-1)^2}\right) = C_1.
\end{equation}
This also covers \eqref{eq:case-iii-1}
and we have \eqref{eq:case-iii-conclusion} for the whole case (iv).
\end{description}
\end{proof}

\subsubsection{Further continuous mixture of processes}
\label{subsubsec:further-mixture}
We finally introduce another continuous mixture of capital processes.
Define a capital process
\begin{equation}
\label{eq:bssd}
\bssd_n^{\br,k}= \frac{1}{\ln k}\int_0^{\ln k} \bss_n^{\br e^{-w}} dw =  \frac{1}{\ln k}\int_0^{\ln k} (2\mcps_n^{\br e^{-w}} - \cps_n^{\br e^{-w+1}})dw.
\end{equation}
Under the same notation as in Lemma \ref{lem:bss-bound},
if $S_n >0$, we have
\begin{equation}
\label{eq:bssd-bound}
\bssd_n^{\br,k} 
\le 
 C_1 + \frac{2}{\ln k} 
\max_{\br'\in [\br k^{-1},\br]} \left(2 e^{{\br'}^3 n} \min \{ e^{S_n^2/(2n)} \frac{\sqrt{2\pi}}{\br' \sqrt{n}}, e^{\br' S_n} \}\right),
\end{equation}
because the length of the interval
\[
\left \{ w \mid \frac{S_n}{ne} < \br e^{-w} < \frac{S_n e}{n}\right \}
\]
is equal to 2.

If $S_n \le 0$, since $\max_{0\le w\le \ln k} e^{\br^3 e^{-3w}n}/e^{-w} \le e^{\br^3 n}k$,
integrating \eqref{eq:bss-negative-sn}, we have 
\begin{equation}
\bssd_n^{\br,k} 
\le 
\frac{2}{\ln k} \int_0^{\ln k}  \min\left\{1, e^{\br^3 e^{-3w}n} \frac{\sqrt{2\pi}}{\br e^{-w} \sqrt{n}}\right\} dw
\le  2
\min  \left\{1, \frac{\sqrt{2\pi}}{\br \sqrt{n}}e^{\br^3 n} k \right\}.
\label{eq:bssd-bound-neg}
\end{equation}

Also note that $\int_0^{\ln k} \mcps_n^{\br e^{-w}} dw$ is bounded by the  double  integral
\begin{equation}
\label{eq:double-integral}
\int_0^{\ln k} \mcps_n^{\br e^{-w}} dw
\le e^{\br^3 n} \int_0^{\ln k}
\int_{2/e}^1  e^{ue^{-w} \br S_n - u^2 e^{-2w} \br^2n /2}  du\,  dw.
\end{equation}

\subsection{Dynamic strategy forcing the sharpness}
\label{subsec:dynamic-strategy}
Write
\[
n_k = %
e^{5k\ln k} = k^{5k}, \ \ \psi_k = \psi(n_k).
\]
Note that $\psi_k$ here is different from $\psi_k$ in Section \ref{sec:validity}.
As in Chapter 5 of \cite{shavov01} and \cite{miyabe-takemura-lil},
we divide the time axis into  ``cycles'' $[n_k, n_{k+1}]$, $k\ge 1$.
Betting strategy for the $k$-th cycle is based on the following betting proportion:
\begin{equation}
\label{eq:br_k}
\br_k= \frac{\psi_{k+1}}{\sqrt{n_{k+1}} }k^2.
\end{equation}


As a preliminary consideration we check 
the growth of $n_k$, $\psi_k$ and $\br_k$.

\begin{lemma}
\label{lem:psi-growth}
\begin{align}
\lim_{k\rightarrow\infty} \br_k^3 n_{k+1} &= 0,
\label{eq:br3-psi}
\\
\lim_{k\rightarrow\infty} \frac{\uc(n_k)}{\psi_{k+1}}&= 1,
\label{eq:uck1}
\\
\lim_{k\rightarrow\infty} \frac{k^5 n_k}{n_{k+1}} &= e^{-5}, 
\label{eq:nknk1}
\\
\lim_{k\rightarrow\infty}  \br_k \sqrt{n_k}\psi_{k+1} &=0.
\label{eq:brk-sqrtnk}
\end{align}
\end{lemma}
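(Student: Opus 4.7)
The plan is to verify the four limits by direct asymptotic analysis, handling \eqref{eq:nknk1} first and then reducing the rest to it together with the sandwich $\lc(n)\le\psi(n)\le\uc(n)$ from \eqref{eq:uc0}. As preparation I would record that $\ln n_k = 5k\ln k$ forces
\begin{align*}
\ln_2 n_k = \ln k + \ln_2 k + \ln 5 = (1+o(1))\ln k, \qquad \ln_3 n_k = (1+o(1))\ln_2 k,
\end{align*}
so that $\uc(n_k)^2$, $\lc(n_k)^2$, and hence $\psi_k^2$, are all of the form $2\ln k + O(\ln_2 k)$; in particular $\psi_k = O(\sqrt{\ln k})$.

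For \eqref{eq:nknk1} I would expand
\begin{align*}
\ln\frac{n_{k+1}}{n_k} = 5(k+1)\ln(k+1) - 5k\ln k = 5k\ln(1+1/k) + 5\ln(k+1) = 5\ln(k+1) + 5 + o(1),
\end{align*}
which gives $n_{k+1}/n_k = (k+1)^5 e^{5+o(1)}$, and therefore $k^5 n_k/n_{k+1} = (k/(k+1))^5 e^{-5+o(1)} \to e^{-5}$.

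For \eqref{eq:uck1}, both the numerator $\uc(n_k)^2 = 2\ln_2 n_k + 4\ln_3 n_k$ and the denominator $\psi_{k+1}^2$ (sandwiched between $\lc(n_{k+1})^2$ and $\uc(n_{k+1})^2$) equal $2\ln k + O(\ln_2 k)$, so the ratio of square roots tends to $1$. The remaining two limits follow by substituting the definition $\br_k = \psi_{k+1} k^2/\sqrt{n_{k+1}}$. For \eqref{eq:br3-psi} I get
\begin{align*}
\br_k^3 n_{k+1} = \frac{\psi_{k+1}^3 k^6}{\sqrt{n_{k+1}}},
\end{align*}
which vanishes because $\sqrt{n_{k+1}} = (k+1)^{5(k+1)/2}$ dominates any polynomial-times-polylogarithmic factor. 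For \eqref{eq:brk-sqrtnk},
\begin{align*}
\br_k \sqrt{n_k}\,\psi_{k+1} = \psi_{k+1}^2 k^2 \sqrt{n_k/n_{k+1}},
\end{align*}
and \eqref{eq:nknk1} gives $\sqrt{n_k/n_{k+1}} = O(k^{-5/2})$, so the whole expression is $O(\ln k/\sqrt{k})\to 0$.

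There is no conceptual obstacle; the estimates are routine. The only point requiring a little care is \eqref{eq:uck1}, where I must check that the ratio tends \emph{precisely} to $1$ rather than merely to a bounded constant. This is immediate once one notes that the leading $2\ln k$ term is identical in numerator and denominator, and the $O(\ln_2 k)$ correction is asymptotically negligible after division.
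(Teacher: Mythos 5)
Your proof is correct and follows essentially the same route as the paper: establish $\ln_2 n_k = (1+o(1))\ln k$ so that $\uc,\lc,\psi$ at $n_k$ and $n_{k+1}$ are all $\sqrt{2\ln k}(1+o(1))$, compute $n_k/n_{k+1}$ exactly (the paper writes $k^5 n_k/n_{k+1}=(k/(k+1))^{5(k+1)}$ directly rather than via logarithms, but this is the same estimate), and then substitute the definition of $\br_k$ to dispose of the remaining two limits using the superpolynomial growth of $\sqrt{n_{k+1}}$ and the bound $n_k/n_{k+1}=O(k^{-5})$. No gaps.
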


\begin{proof}
By $\psi(n)\le \uc(n)$ for sufficiently large $n$, we have
\[
\br_k^3 n_{k+1} 
= 
\frac{\psi_{k+1}^3}{\sqrt{n_{k+1}}}k^6 
\le  %
\frac{(2\ln_2 n_{k+1} + 4 \ln_3 n_{k+1})^{3/2}}
{(k+1)^{5(k+1)/2}} k^6 \rightarrow 0 \qquad (k\rightarrow\infty)
\]
and \eqref{eq:br3-psi} holds.
All of $\uc(n_k)$, $\uc(n_{k+1})$, $\lc(n_k)$, $\lc(n_{k+1})$
are of the order 
\begin{equation}
\label{eq:order-of-nk}
\sqrt{2 \ln \ln e^{5k\ln k}}(1+o(1))=\sqrt{2\ln k}(1+o(1))
\end{equation}
as $k\rightarrow\infty$ and \eqref{eq:uck1} holds by \eqref{eq:uc0}.
\eqref{eq:nknk1} holds because
\[
\lim_{k\rightarrow\infty} \frac{k^{5(k+1)}}{(k+1)^{5(k+1)}} =\lim_{k\rightarrow\infty} \left(1-\frac{1}{k+1}\right)^{5(k+1)}
= e^{-5}.
\]
Then $n_k/n_{k+1}=O(k^{-5})$ and \eqref{eq:brk-sqrtnk} holds by

\[
\br_k \sqrt{n_k}\psi_{k+1} = \psi^2_{k+1}k^2 (n_k/n_{k+1})^{1/2} \rightarrow 0 \qquad(k\rightarrow\infty).
\]
\end{proof}

By \eqref{eq:br3-psi},
we choose $k_0$ such that  $\br_k^3 (1+e^3) n_{k+1} \le \delta$ for $k\ge k_0$.
Then in our formulas in the previous section, in the $k$-th cycle,
we have 
\[
e^{\br_k^3 n}\le  e^{\br_k^3 n_{k+1}} \le e^\delta.
\]
Furthermore $C_1$ in \eqref{eq:case-i-negative-condition-1} can be taken  as
\begin{align}
C_1 
&=2e^{\delta}\exp \left(\frac{(2e-1)(\delta (1+e^3) + \ln 2}{(e-1)^2}\right).
\label{eq:C1-fixed}
\end{align}

We also take $k_0$ sufficiently large such that left-hand sides of 
\eqref{eq:uck1},
\eqref{eq:nknk1} and
\eqref{eq:brk-sqrtnk} are within $\pm \delta$ of their respective limits.

For each cycle $[n_k, n_{k+1}]$, $k\ge k_0$, 
we apply the following capital process 
to $x_n$'s in the cycle.
\begin{equation}
\label{eq:dynp}
\dynp_{n}^{\br_k} =\alpha + \frac{1}{D} (\ln k) \psi_{k+1} e^{-\psi_{k+1}^2/2} (\alpha-\bssd_{n-n_k}^{\br_k,k}), 
\qquad \alpha=1-2/e, \ D=\frac{32\sqrt{2\pi}}{\alpha}.
\end{equation}
Here we gave a specific value of $D$ for definiteness, but from the proof below it will be clear
that any sufficiently large $D$ can be used.
Since the strategy for $\bssd_{n-n_k}^{\br_k,k}$ is applied only to $x_n$'s in the cycle, 
$\alpha=\dynp_{n_k}^{\br_k}=\bssd_{0}^{\br_k}$.
Concerning $\dynp_{n}^{\br_k}$ we prove the following proposition.
In its proof we increase $k_0$ to satisfy 
all of \eqref{eq:C_1to0}--\eqref{eq:stop}, if needed.

\begin{proposition}
\label{prop:dynp}
Suppose that  
\begin{equation}
\label{eq:within-range}
-\sqrt{n} \uc(n) \le S_n \le \sqrt{n} \psi(n), \qquad  \forall n\in [n_k,n_{k+1}]. 
\end{equation}
for all sufficiently large $k$. 
Then there exists $k_0$ such that for all $k\ge k_0$
\begin{equation}
\label{eq:nkn-lower-bound}
\dynp_n^{\br_k} > \frac{\alpha}{2}, 
\ \forall n\in [n_k,n_{k+1}],
\end{equation}
and
\begin{equation}
\label{eq:nk+1} 
\dynp_{n_{k+1}}^{\br_k} \ge \alpha \left(1 + \frac{1-\delta}{D} (\ln k) \psi_{k+1} e^{-\psi_{k+1}^2/2}\right).
\end{equation}
\end{proposition}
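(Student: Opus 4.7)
The plan is to reduce both conclusions to upper bounds on $\bssd_{n-n_k}^{\br_k,k}$. From \eqref{eq:dynp}, inequality \eqref{eq:nkn-lower-bound} is equivalent to
\[
\bssd_{n-n_k}^{\br_k,k}\le \alpha+\frac{\alpha D\, e^{\psi_{k+1}^2/2}}{2(\ln k)\psi_{k+1}},
\]
while \eqref{eq:nk+1} is equivalent to $\bssd_{n_{k+1}-n_k}^{\br_k,k}\le\alpha\delta$. Since $\psi_{k+1}\ge\lc(n_{k+1})$, by \eqref{eq:order-of-nk} the right-hand side of the first inequality grows at least linearly in $k$, so part~(i) allows generous slack, whereas part~(ii) requires $\bssd$ to be essentially zero at the endpoint of the cycle.

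Writing $T_m:=S_{n_k+m}-S_{n_k}$ for the in-cycle partial sum, the hypothesis \eqref{eq:within-range} combined with Lemma~\ref{lem:psi-growth} gives the uniform estimate $|T_m|\le(1+\delta)\sqrt{n_k+m}\,\psi_{k+1}$ for $k\ge k_0$. For part~(i) I substitute this into \eqref{eq:bssd-bound} when $T_m>0$ and into \eqref{eq:bssd-bound-neg} when $T_m\le 0$. The constant $C_1$ fixed by \eqref{eq:C1-fixed} is trivially absorbed by the budget. The case-(iii) maximum is controlled by noting that $\br'$ is restricted to $(T_m/(e(n_k+m)),\,eT_m/(n_k+m))\cap[\br_k/k,\br_k]$: evaluating the first branch of the inner minimum in \eqref{eq:bss-positive2-sn} at the lower endpoint $\br'=T_m/(e(n_k+m))$ gives a value of order $e^{\psi_{k+1}^2/2}\sqrt{n_k+m}/T_m$, and after the $1/\ln k$ prefactor, careful bookkeeping with the scaling \eqref{eq:br_k} keeps the total well under the budget.

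For part~(ii) the key observation is that at $n=n_{k+1}$ every $\br'\in[\br_k/k,\br_k]$ satisfies $\br' n_{k+1}\ge\psi_{k+1}\sqrt{n_{k+1}}\,k$, while $|T_{n_{k+1}-n_k}|\le(1+\delta)\sqrt{n_{k+1}}\,\psi_{k+1}$, so $|T|/(\br' n_{k+1})\le(1+\delta)/k\ll 1/e$ uniformly in $\br'$. Hence every point of the mixture lies in case~(i) or~(ii) of Lemma~\ref{lem:bss-bound}, far to the left of the peak. Instead of using the loose $C_1$, I invoke the sharper precursor \eqref{eq:mcps-positive1-sn}, which in case~(ii) gives
\[
\bss_{n_{k+1}-n_k}^{\br'}\le 2e^{\delta}\exp\!\bigl(2\br'(T/e-\br'(n_{k+1}-n_k)/e^2)\bigr);
\]
the exponent is negative of order $-\psi_{k+1}^2 k^2/e^2$ even at the smallest $\br'=\br_k/k$, so $\bss^{\br'}$ is doubly-exponentially small in $k$. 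Case~(i) is handled similarly via \eqref{eq:bss-negative-sn}. Averaging in $w\in[0,\ln k]$ preserves the smallness and yields $\bssd_{n_{k+1}-n_k}^{\br_k,k}\le\alpha\delta$ for $k\ge k_0$.

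The hardest step I anticipate is the case-(iii) analysis in part~(i): when $m$ is small relative to $n_{k+1}$, the optimal $\br'$ for the minimum in \eqref{eq:bss-positive2-sn} may fall outside $[\br_k/k,\br_k]$, forcing one to select the correct endpoint and branch of the minimum and to leverage the precise asymptotics from Lemma~\ref{lem:psi-growth} together with the choice $\br_k=k^2\psi_{k+1}/\sqrt{n_{k+1}}$ to squeeze the maximum below the linear-in-$k$ budget.
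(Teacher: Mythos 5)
Your reductions are correct ($\dynp_n^{\br_k}>\alpha/2$ amounts to $\bssd_{n-n_k}^{\br_k,k}<\alpha+\frac{\alpha D}{2}\frac{e^{\psi_{k+1}^2/2}}{(\ln k)\psi_{k+1}}$, and \eqref{eq:nk+1} amounts to $\bssd_{n_{k+1}-n_k}^{\br_k,k}\le\alpha\delta$), and your part~(ii) is sound and essentially the paper's argument in different clothing: the paper locates the maximizer $y=S/(\br_k n_k')$ of the quadratic in the double integral \eqref{eq:double-integral} to the left of the integration range $[2/(ek),1]$ (inequality \eqref{eq:y-lower-bound}), which is the same observation as your ``every $\br'$ lands in case (i)/(ii), far left of the peak.'' The genuine gap is in part~(i), precisely at the step you flag as the hardest and then do not carry out. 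Two problems. First, your quantitative claim is wrong: the first branch of the minimum is $e^{T_m^2/(2m)}\sqrt{2\pi}/(\br'\sqrt{m})$ with $m$ the \emph{in-cycle} time (the case boundaries in Lemma \ref{lem:bss-bound} also involve $\br' m$, not $\br'(n_k+m)$), and $e^{T_m^2/(2m)}$ is \emph{not} of order $e^{\psi_{k+1}^2/2}$: since $T_m$ can be as large as $\sqrt{n_k}\uc(n_k)+\sqrt{n_k+m}\,\psi(n_k+m)\approx 2\sqrt{n_k}\,\psi_{k+1}$ even for $m$ as small as $1$ (your $(1+\delta)$ should in any case be $(2+\delta)$), the ratio $T_m^2/(2m)$ blows up as $m\downarrow$. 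So ``careful bookkeeping'' cannot rescue that branch for small $m$; no choice of $\br'$ in the stated range fixes it. Second, the required bound is not ``generous'': after the $2/\ln k$ prefactor one still needs $\psi_{k+1}e^{-\psi_{k+1}^2/2}\cdot\max_{\br'}(\cdots)$ bounded by the \emph{constant} $D\alpha/4$, and the true size of the maximum is $\asymp e^{\psi_{k+1}^2/2}/\psi_{k+1}$, so the margin is only a constant factor tied to the specific $D$ in \eqref{eq:dynp}.

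The missing idea is a split on the in-cycle time rather than on which case of Lemma \ref{lem:bss-bound} one is in. With $c_1=9/(1+2\delta)^2$ so that $\tfrac12-\tfrac1{\sqrt{c_1}}-\delta>0$: (a) if $m\le\psi_{k+1}^2/(c_1\br'^2)$, use the \emph{second} branch $e^{\br' T_m}$ of the minimum; then $\br' T_m\le\big((1+\delta)\br_k\sqrt{n_k}+\sqrt{\br_k^2n_k+\psi_{k+1}^2/c_1}\big)\psi_{k+1}\le(1+\delta)\psi_{k+1}^2/\sqrt{c_1}$ by \eqref{eq:brk-sqrtnk}, and $\psi_{k+1}e^{-\psi_{k+1}^2/2}e^{\br'T_m}\le\psi_{k+1}e^{-\psi_{k+1}^2(1/2-1/\sqrt{c_1}-\delta)}\to0$; (b) if $m>\psi_{k+1}^2/(c_1\br'^2)$, then $\br'\sqrt{m}\ge\psi_{k+1}/\sqrt{c_1}$ and, crucially, $n_k/m\cdot\psi_{k+1}^2<c_1\br_k^2n_k=c_1k^4(n_k/n_{k+1})\psi_{k+1}^2\to0$, so expanding $T_m^2/(2m)\le\frac{((1+\delta)\sqrt{n_k}+\sqrt{n_k+m})^2}{2m}\psi_{k+1}^2=\frac{\psi_{k+1}^2}{2}+o(1)$; the $e^{-\psi_{k+1}^2/2}$ prefactor then cancels the Gaussian factor exactly and leaves the constant $2e^{2\delta}\sqrt{2\pi c_1}<D\alpha/4-\delta$. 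Without regime (a) and the cancellation in regime (b) the proof of \eqref{eq:nkn-lower-bound} does not close, so as written your argument has a real hole there.
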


\begin{proof}
In our proof, for notational simplicity we write $n$ instead of $n-n_k$.

For proving \eqref{eq:nkn-lower-bound}, 
we use \eqref{eq:bssd-bound} and \eqref{eq:bssd-bound-neg} for  $S_n - S_{n_k}$. 
We want to bound $\bssd_{n}^{\br_k,k}$ from above.
Note that 
$(\ln k)\psi_{k+1} e^{-\psi_{k+1}^2/2}\rightarrow 0$ as $k\rightarrow\infty$
by \eqref{eq:order-of-nk}.
Hence the case $S_n\le 0$ is trivial because $\bssd_{n}^{\br_k,k}$ is bounded from above 
by \eqref{eq:bssd-bound-neg}.

For the case $S_n > 0$, by the term $\dfrac{2}{\ln k}$ on the right-hand side of \eqref{eq:bssd-bound}, 
it suffices to show 
\begin{align*}
&0< S_n \le \sqrt{n_k} \uc(n_k) + \sqrt{n_k + n} \psi(n_k+ n)  \\
& \quad \Rightarrow \  
\psi_{k+1} e^{-\psi_{k+1}^2/2} 
\times 2 e^{\delta} \min \{ e^{S_n^2/(2n)} \frac{\sqrt{2\pi}}{\br \sqrt{n}}, e^{\br S_n} \}
< \frac{D\alpha}{4} -\delta, \ \  
\forall \br_k \in[k^{-1},\br_k], \ 
\forall n\in [0,n_{k+1}-n_k],
\end{align*}
for sufficiently large $k$ such that 
\begin{equation}
\label{eq:C_1to0}
C_1 (\ln k) \psi_{k+1} e^{-\psi_{k+1}^2/2} < \delta.
\end{equation}
Let
\begin{equation}
\label{eq:c1-const}
c_1 =\frac{9}{(1+2\delta)^2} \quad \text{s.t.} \quad 
\frac{1}{2} - \frac{1}{\sqrt{c_1}} - \delta > 0.
\end{equation}
We distinguish two cases: 
\[
\text{(a)} \ n\le \frac{\psi_{k+1}^2}{c_1 \br^2}, \quad \text{(b)}\  \frac{\psi_{k+1}^2}{c_1 \br^2} < n \le n_{k+1}-n_k.
\] 

For case (a),  $\sqrt{n_k} \uc(n_k) \le (1+\delta)\sqrt{n_k}\psi_{k+1}$ by \eqref{eq:uck1}
for sufficiently large $k$.
Also $\psi(n_k+n)\le \psi(n_{k+1})$. 
Hence in this case 
\[
\br S_n  \le  \left((1+\delta)\br \sqrt{n_k} + \sqrt{\br^2n_k + \psi_{k+1}^2/c_1}\right)\psi_{k+1}.
\]
Then for $\br \le \br_k$, by \eqref{eq:brk-sqrtnk}
\begin{equation}
\label{eq:another-large-k}
\br S_n \le  \left((1+\delta)\br_k\sqrt{n_k} + \sqrt{\br_k^2 n_k+ \psi_{k+1}^2/c_1}\right)\psi_{k+1}
\le  \frac{\psi_{k+1}^2}{\sqrt{c_1}}(1+\delta) 
\end{equation}
for sufficiently large $k$.
Since 
\begin{equation}
\label{eq:case_a_con} 
\psi_{k+1} e^{-\psi_{k+1}^2/2} \times 2 e^{\delta} e^{\br S_n} 
\le 2 \psi_{k+1}  e^{\delta} \exp\left(-\psi_{k+1}^2\big(\frac{1}{2} - \frac{1}{\sqrt{c_1}}-\delta\big)\right) \rightarrow 0 \qquad (k\rightarrow\infty), 
\end{equation}
we have
$\dynp_n^{\br_k} > \alpha/2$ uniformly in $\br\in [\br_k k^{-1},\br_k]$ and in $n\le \psi_{k+1}^2/c_1\br^2$.

For case (b), $\psi_{k+1}/\sqrt{c_1}\le \br \sqrt{n}$ and $S_n \le ((1+\delta)\sqrt{n_k} + \sqrt{n_k + n})\psi_{k+1}$.
Hence 
\begin{align*}
\psi_{k+1} e^{-\psi_{k+1}^2/2} \times 
2 e^{\delta}e^{S_n^2/(2n)} \frac{\sqrt{2\pi}}{\br \sqrt{n}}  
& \le
\psi_{k+1} e^{-\psi_{k+1}^2/2} \times \frac{2e^\delta\sqrt{2\pi}\sqrt{c_1}}{\psi_{k+1}}
\exp\left(\frac{\big((1+\delta)\sqrt{n_k} + \sqrt{n_k + n}\big)^2}{2n}\psi_{k+1}^2\right) \\
&= 
2 e^\delta\sqrt{2\pi}\sqrt{c_1} \exp\left( \frac{(1+(1+\delta)^2)n_k + 2(1+\delta)\sqrt{n_k}\sqrt{n_k+n}}{2n} \psi_{k+1}^2\right).
\end{align*}
For $\br\le \br_k$
\[
\frac{\psi_{k+1}^2}{c_1 \br^2} <  n \ \Rightarrow \ \frac{n_k}{n} \psi_{k+1}^2
 < c_1 \br^2 n_k  \le c_1 \br_k^2 n_k = c_1 \frac{n_k}{n_{k+1}} k^4 \psi_{k+1}^2.
\]
Hence $\psi_{k+1}^2 n_k/n \rightarrow 0$ as $k\rightarrow \infty$. 
Similarly $\sqrt{n_k/k} \psi_{k+1}^2 \rightarrow 0$ as $k\rightarrow \infty$. 
Therefore, for sufficiently large $k$,
\begin{equation}
\psi_{k+1} e^{-\psi_{k+1}^2/2} \times 
2 e^{\delta}e^{S_n^2/(2n)} \frac{\sqrt{2\pi}}{\br \sqrt{n}} 
\le 2 e^{2\delta} \sqrt{2\pi}\sqrt{c_1}
<  \frac{D\alpha}{4}-\delta,
\label{eq:case_b_con}
\end{equation}
with the choice of $D$ in  \eqref{eq:dynp} and $c_1$ in \eqref{eq:c1-const}.
This proves \eqref{eq:nkn-lower-bound}.

Now we prove \eqref{eq:nk+1}. Write $\nstark =n_{k+1}-n_k$. 
We will show that $\bssd_{\nstark }^{\br_k,k}\rightarrow 0$ as $k\rightarrow\infty$ if
\[
S_{\nstark } \le 2\sqrt{n_k} \uc(n_k) + \sqrt{\nstark } \psi(\nstark ) \le 2 \sqrt{\nstark } \psi(\nstark ).
\] 
Let $y=u e^{-w}$. Then 
$2/(ek) \le y \le 1$ in the integral \eqref{eq:double-integral}.
As in \eqref{eq:completing-the-square}, 
the quadratic function  $y \br_k S_{\nstark } - y^2 \br_k^2 \nstark /2$ 
is maximized at $y=S_{\nstark }/(\br_k \nstark )$.  Now
\begin{equation}
\frac{S_{\nstark }}{\br_k \nstark } \le \frac{2 \psi(\nstark )}{\br_k \sqrt{\nstark }} \le
\frac{2 \psi(\nstark )}{\psi_{k+1}} \sqrt{\frac{n_{k+1}}{\nstark }} \frac{1}{k^2} \le 2(1+\delta) \frac{1}{k^2} < \frac{2}{ek}
\label{eq:y-lower-bound}
\end{equation}
for sufficiently large $k$, because $\lim_{k\rightarrow\infty} n_{k+1}/\nstark  = 1$.
Then the integrand in \eqref{eq:double-integral} is maximized at $y=ue^{-w}=2/(ek)$ and we have
\begin{align}
\bssd_{\nstark }^{\br_k,k}  
&\le 
2 e^\delta \alpha \exp\left( \frac{2}{ek} \br_k  S_{\nstark } - \left(\frac{2}{ek}\right)^2 \br_k^2  \nstark /2\right)
\le 
2 e^\delta\alpha \exp\left( \frac{4}{ek} \br_k  \sqrt{\nstark }\psi(\nstark ) 
- \frac{2}{e^2k^2} \br_k^2 \nstark \right) 
\nonumber \\
&\le 2 e^\delta\alpha \exp\left(\frac{4}{e} k\psi_{k+1}^2 \sqrt{\frac{\nstark }{n_{k+1}}}
-\frac{2}{e^2} k^2 \psi_{k+1}^2 \frac{\nstark }{n_{k+1}}\right) \rightarrow 0  \quad (k\rightarrow 0).
\label{eq:stop}
\end{align}
%
%
\end{proof}

We assume that by the validity result, Skeptic already employs a strategy forcing
$S_n \ge -\sqrt{n}\uc(n)\ a.a.$  
In addition to this strategy, based on Proposition \ref{prop:dynp}, 
consider the following strategy.
\begin{quote}
Start with initial capital $\cps_0=\alpha$.\\
Set $k=k_0$.\\
Do the followings repeatedly:\\
\indent 1.  Apply the strategy in Proposition \ref{prop:dynp} for $n\in [n_k, n_{k+1}]$. \\ 
\indent \quad If \eqref{eq:within-range} holds, then go to 2. Otherwise go to 3.\\
\indent 2. Let $k=k+1$. Go to 1.\\
\indent 3. Wait until $\exists k'$ such that $-\sqrt{n_{k'}} \uc(n_{k'}) \le S_{n_{k'}} \le
\sqrt{n_{k'}}\psi(n_{k'})$.\\ 
\indent \quad Set $k=k'$ and go to 1.
\end{quote}

Since Skeptic already employs a strategy forcing 
$S_n \ge -\sqrt{n} \uc(n)\ a.a.$, the lower bound in \eqref{eq:within-range} violated only
finite number of times.  Hence if $S_n \le \sqrt{n}\psi(n) \ a.a.$, then
Step 3 is performed only finite number of times.  Also when
Step 3 is performed, the overshoot of $|x_n|=1$ does not make Skeptic bankrupt by
\eqref{eq:nkn-lower-bound}. Now for each iteration of Step 2, Skeptic
multiplies his capital at least by 
\[
1 + \frac{1-\delta}{D} (\ln k) \psi_{k+1} e^{-\psi_{k+1}^2/2}.
\]
Then
\[
\frac{1-\delta}{D} \sum_{k=k_0}^\infty (\ln k)\psi_{k+1} e^{-\psi_{k+1}^2/2}
  \le \prod_{k=k_0}^\infty  \left( 1 + \frac{1-\delta}{D} (\ln k)\psi_{k+1} e^{-\psi_{k+1}^2/2}\right).
\]
Since the left-hand side diverges to infinity, the above strategy forces the sharpness.

\section{Discussion}
\label{sec:discussion}
In this paper we gave a game-theoretic proof of EFKP-LIL. Our validity proof is very short.
Our sharpness proof is elementary, but it is still long.  
A simpler sharpness proof is desired.

In our sharpness proof we used the change of time scale 
$\lambda =e^{5k\ln k}$ and formed the cycles $[n_k, n_{k+1}]$
based on this time scale.
There is a question whether this scale is the best.  Actually we can prove
the sharpness based on the change of time scale $\lambda =e^{ck\ln_2 k}$ for large $c$.
Any sparser cycles than $n_k = e^{ck\ln_2 k}$ can be used for proving the sharpness.

It is interesting to consider a generalization of EFKP-LIL to games other than the fair-coin game.
In particular the case of self-normalized sums discussed in \cite{GriKue91,CsoSzyWan03} is 
also important from game-theoretic viewpoint.
Self-normalized sums in game-theoretic probability have been studied in \cite{kumon-etal-sos}.

Another possible extension is that $\psi(n)$ is sequentially given by a third player
Forecaster at the beginning of each round of the game.  From the game-theoretic viewpoint it is
of interest to ask whether Skeptic can force
\[
\sum_{n=1}^\infty \frac{ \psi(n)}{n} e^{-\psi(n)^2/2} = \infty \ \ 
\Leftrightarrow \ \ 
S_n \ge \sqrt{n}\psi(n) \ \ i.o.
\]

\bibliographystyle{abbrv}
\bibliography{efkp}

\end{document}